\newtheorem{theorem}{Theorem}[section]
\newtheorem{corollary}[theorem]{Corollary}
\newtheorem{proposition}[theorem]{Proposition}
\newtheorem{korselt's criterion}[theorem]{Korselt's criterion}
\newtheorem{definition}[theorem]{Definition}
\newtheorem{remark}[theorem]{Remark}
\newtheorem{example}[theorem]{Example}
\newtheorem{conjecture}[theorem]{Conjecture}
\newtheorem{proof of  Main Theorem}[theorem]{Proof of  Main Theorem}
\numberwithin{equation}{section}
\begin{document}
\title[Connections on the Rational Korselt Set of $pq$ ]
{Connections on the Rational Korselt Set of $pq$}

\author{Nejib Ghanmi}
\address[Ghanmi]{Preparatory Institute of Engineering Studies, Tunis university, Tunisia.}
\address[]{\hspace{1.5cm}Current Address: University College of Jammum, Department of Mathematics, Mekkah, Saudi Arabia.}

\email{naghanmi@uqu.edu.sa\; and\; neghanmi@yahoo.fr}

\thanks{}

\subjclass[2010]{Primary $11Y16$; Secondary $11Y11$, $11A51$.}

\keywords{Prime number, Carmichael number, Squarefree composite number, Korselt base, Korselt number, Korselt set}

\begin{abstract} For  a positive integer $N$ and $\mathbb{A}$ a subset of $\mathbb{Q}$, let  $\mathbb{A}$-$\mathcal{KS}(N)$ denote the set of $\alpha=\dfrac{\alpha_{1}}{\alpha_{2}}\in \mathbb{A}\setminus \{0,N\}$   verifying $\alpha_{2}r-\alpha_{1}$ divides $\alpha_{2}N-\alpha_{1}$ for every prime divisor $r$ of $N$. The set  $\mathbb{A}$-$\mathcal{KS}(N)$ is   called the set of $N$-Korselt bases  in $\mathbb{A}$.

 Let  $p, q$ be two  distinct prime numbers. In this paper, we prove that each $pq$-Korselt base  in $\mathbb{Z}\setminus\{ q+p-1\}$  generates other(s) in $\mathbb{Q}$-$\mathcal{KS}(pq)$. More precisely, we will prove that  if   $(\mathbb{Q}\setminus\mathbb{Z})$-$\mathcal{KS}(pq)=\emptyset$ then $\mathbb{Z}$-$\mathcal{KS}(pq)=\{ q+p-1\}$.

\end{abstract}

%%% ----------------------------------------------------------------------
\maketitle
%%% ----------------------------------------------------------------------

\section{Introduction}

A Carmichael number $N$ is a positive composite integer that satisfies $a^{N}\equiv 1 \, mod \, N $; for any $a$ with $gcd(a; N) = 1$: It follows that a Carmichael number $N$ meets Korselt's
criterion:
\begin{korselt's criterion}[\cite{Kor}]
A composite integer  $N>1$ is a Carmichael number if and only if $p-1$ divides $N-1$ for all prime factors  $p$ of $N$ .

\end{korselt's criterion}

In ~\cite{ BouEchPin, echi}, Bouall\`egue-Echi-Pinch introduced the notion of $\alpha$-Korselt numbers where $\alpha\in\mathbb{Z}\setminus \{0\} $ as a generalization of Carmichael numbers as follows.
\begin{definition}
An $\alpha$-Korselt number is a number $N$ such that $p-\alpha$ divides $N-\alpha$ for all $p$ prime divisor of $N$.
\end{definition}
 The $\alpha$-Korselt numbers for $\alpha \in \mathbb{Z}$ are well investigated last years specially in ~\cite{ KorGhan, BouEchPin, Ghanmi, echi, WilGhan}. In ~\cite{Ghanmi2},  Ghanmi proposed  another generalization for $\alpha=\dfrac{\alpha_{1}}{\alpha_{2}}\in\mathbb{Q}\setminus\{0\}$  by setting the following definitions.

\begin{definition} Let $N\in \mathbb{N}\setminus\{0,1\}$,  $\alpha=\dfrac{\alpha_{1}}{\alpha_{2}}\in \mathbb{Q}\setminus \{0\}$ with $gcd(\alpha_{1}, \alpha_{2})=1$ and $\mathbb{A}$ a subset of $\mathbb{Q}$.
Then
\begin{enumerate}
  \item $N$ is said to be an \emph{$\alpha$-Korselt number\index{Korselt number}} (\emph{$K_{\alpha}$-number}, for
short), if $N\neq \alpha$ and $\alpha_{2}p-\alpha_{1}$ divides $\alpha_{2}N-\alpha_{1}$ for
every prime divisor $p$ of $N$.

\item By the \emph{$\mathbb{A}$-Korselt set}\index{Korselt set} of the number $N$ (or the Korselt set of $N$ over \emph{$\mathbb{A}$}), we mean the set $\mathbb{A}$-$\mathcal{KS}(N)$ of
all $\beta\in \mathbb{A}\setminus\{0,N\}$ such that $N$ is a $K_{\beta}$-number.
  \item The cardinality of $\mathbb{A}$-$\mathcal{KS}(N)$ will be called the \emph{$\mathbb{A}$-Korselt
weight}\index{Korselt weight} of $N$; we denote it by $\mathbb{A}$-$\mathcal{KW}(N)$.

\end{enumerate}

\end{definition}
 Carmichael numbers are exactly the 1-Korselt  squarefree composite numbers.
   Further, in ~\cite{Ghanmi3,Ghanmi4} Ghanmi set  the notion of Korselt bases as follows.

\begin{definition} Let $N\in \mathbb{N}\setminus\{0,1\}$, $\alpha\in \mathbb{Q}$ and $\mathbb{B}$ be a subset of $\mathbb{N}$. Then

\begin{enumerate}
\item  $\alpha$ is called \emph{$N$-Korselt base\index{Korselt base}}(\emph{$K_{N}$-base}, for
short), if $N$ is a \emph{$K_{\alpha}$-number}.
\item By the \emph{$\mathbb{B}$-Korselt set}\index{Korselt base set} of the base $\alpha$ (or the Korselt set of the base $\alpha$ over \emph{$\mathbb{B}$}), we mean the set $\mathbb{B}$-$\mathcal{KS}(B(\alpha))$ of
all $M\in \mathbb{B}$ such that $\alpha$ is a
$K_{M}$-base.
\item The cardinality of  $\mathbb{B}$-$\mathcal{KS}(B(\alpha))$ will be called the \emph{$\mathbb{B}$-Korselt
weight}\index{Korselt  weight} of the base $\alpha$; we denote it by $\mathbb{B}$-$\mathcal{KW}(B(\alpha))$.
\end{enumerate}
\end{definition}

 The set $\mathbb{Q} \text{-}\mathcal{KS}(N)$ is simply called the rational Korselt set of $N$.

In this paper we are concerned only with a squarefree composite number $N$.

After extending  the notion of Korselt numbers to $ \mathbb{Q}$ and in order to study the Korselt numbers and their korselt sets  over $\mathbb{Q}$, a natural query can be asked about the existence of some connections between the Korselt bases of a number $N$ over the sets $\mathbb{Z}$ and $ \mathbb{Q\setminus}\mathbb{Z}$. The answer is affirmative for a squarefree composite number $N$ with two prime factors. Indeed, if we look deeply at a list of Korselt numbers and their Korselt sets [ see Table $1$ and Table $2$], we remark the nonexistence  of any squarefree composite number $N$ with two prime factors such  that $\mathbb{Z}$-$\mathcal{KW}(N)\geq 2$ and $(\mathbb{Q}\setminus\mathbb{Z})$-$\mathcal{KS}(N)=\emptyset$. This inspired us to claim that such relation between  $\mathbb{Z}$-$\mathcal{KS}(N)$ and $(\mathbb{Q}\setminus\mathbb{Z})$-$\mathcal{KS}(N)$ exists.   The case when $N$ is squarefree with more than two prime factors is still untreated. In order to display this  (these) connection(s) we  organize our work in this paper  as  follows :

- In Section $2$, we give some numerical data showing  connections between the Korselt bases of $N$ over $\mathbb{Z}$ and $\mathbb{Q}\setminus\mathbb{Z}$.

- In Section $3$, we prove  that for each squarefree composite  number $N$ with two prime factors,  some $N$-Korselt bases in $\mathbb{Z}$  generate others in the same set $\mathbb{Z}$-$\mathcal{KS}(N)$.

 - In Section $4$,  we show that for each squarefree composite  number $N=pq$ with two prime factors,  each $N$-Korselt bases in $\mathbb{Z}\setminus\{q+p-1\}$  generates a Korselt base   in $ \mathbb{Q}\setminus\mathbb{Z}$.

\section{Preliminaries}
 The following data in  Table $1$ and Table $2$ illustrate some cases for Korselt numbers and sets:

 -- Table $1$ gives the set  $\mathbb{Z}$-$\mathcal{KS}(N)$ for  numbers $N=pq$ with $p, q$ primes and $p<q\leq53$ such that  $(\mathbb{Q}\setminus\mathbb{Z})$-$\mathcal{KS}(N)=\emptyset$.

-- Table $2$ gives, for each integer $1\leq i\leq 7$,  the smallest squarefree composite number $N_i=pq$ with $p,q$ primes, $p<q<10^{3}$   such that  $\mathbb{Z}$-$\mathcal{KW}(N_i)=i$ and  $(\mathbb{Q}\setminus\mathbb{Z})$-$\mathcal{KW}(N_i)$ be the smallest.

\begin{center}
\medskip
{\small
\setcellgapes{3pt}
\begin{tabular}{|c|c||c|c|} \hline
$N$ & $\mathbb{Z}$-$\mathcal{KS}(N)$&$N$ & $\mathbb{Z}$-$\mathcal{KS}(N)$ \\ \hline
 $2*11$ &$\{12 \}$ & $2*41$  &$\{42\}$     \\
 $2*13$ & $\{14\}$ &$3*41$  &$\{43\}$ \\
 $2*17$  & $\{18\}$ &$5*41$& $\{45\}$ \\
 $2*19$  & $\{20\}$ & $2*43$  &  $\{44\}$\\
 $3*19$  & $\{21\}$ & $3*43$  &$\{45\}$\\
 $2*23$  & $\{24\}$ &$5*43$  &$\{47\}$\\
 $3*23$  & $\{25\}$ & $2*47$  &$\{48\}$\\
 $2*29$ &  $\{30\}$&$3*47$  &$\{49\}$\\
 $3*29$&  $\{31\}$&$5*47$ &$\{51\}$\\
 $2*31$  &  $\{32\}$&$13*47$&$\{59\}$\\
 $3*31$& $\{33\}$&$2*53$&$\{54\}$\\
  $2*37$ & $\{38\}$&$3*53$&$\{55\}$\\
  $3*37$  &$\{39\}$& $5*53$&$\{57\}$\\ \hline
  \end{tabular}

\medskip
\textbf{Table $1$.} $\mathbb{Z}$-$\mathcal{KS}(N)$ where $N=pq; p,q$ primes , $p<q\leq53$ and  $(\mathbb{Q}\setminus\mathbb{Z})$-$\mathcal{KS}(N)=\emptyset.$
}\end{center}

\begin{center}

\bigskip
{\small
\setcellgapes{4pt}
\begin{tabular}{|l|l|l||l|l|l|} \hline

$i$ &$N_i$ &$\mathbb{Z}$-$\mathcal{KS}(N_i)$  &$(\mathbb{Q}\setminus\mathbb{Z})$-$\mathcal{KW}(N_i)$ \\ \hline

1 & $2*11$& $\{12\}$& $0$ \\ \hline

2& $2*7$&$\{6,8\}$&$1$\\ \hline

3  &$5*19$&$\{15,20,23\}$& $2$\\ \hline
4&$31*59$&$\{29,60,62,89\}$&$5$\\ \hline
5& $67*97$&$\{64, 75, 91, 99, 163\}$&$12$ \\ \hline

6& $757*881$&$\{755, 773, 797, 845, 867, 1637\}$&$17$ \\ \hline
7& $37*61$&$\{25, 43, 49, 52, 57, 67, 97\}$&$22$ \\ \hline

\end{tabular}

\bigskip
\textbf{Table $2$.} The smallest  $N_i=pq$ with $p,q$ primes, $p<q<10^{3}$  such that  $\mathbb{Z}$-$\mathcal{KW}(N_i)=i$ and  $(\mathbb{Q}\setminus\mathbb{Z})$-$\mathcal{KW}(N_i)$ be the smallest.
}\end{center}

\bigskip

From Table $1$ and Table $2$ we remark that there is no squarefree composite  number $N$  with two prime factors  such  that $\mathbb{Z}$-$\mathcal{KW}(N)\geq 2$ and $(\mathbb{Q}\setminus\mathbb{Z})$-$\mathcal{KS}(N)=\emptyset$, this equivalent to establish the following result:
\begin{theorem}[\textbf{Main Theorem}]\label{fond}
  Let $N=pq$. If $(\mathbb{Q}\setminus\mathbb{Z})$-$\mathcal{KS}(N)=\emptyset$ then $\mathbb{Z}$-$\mathcal{KS}(N)=\{ q+p-1\}$.
\end{theorem}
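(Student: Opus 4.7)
The plan is to prove the contrapositive: given $\alpha \in \mathbb{Z}\text{-}\mathcal{KS}(pq) \setminus \{q+p-1\}$, to construct a non-integer rational $pq$-Korselt base $\beta$. First, I would recast Korselt's condition in divisor-pair form: a rational $\beta = \beta_1/\beta_2 \in \mathbb{Q}$ in lowest terms is a $pq$-Korselt base iff $A := p\beta_2 - \beta_1$ divides $p(q-1)$, $B := q\beta_2 - \beta_1$ divides $q(p-1)$, and $B - A = (q-p)\beta_2$. Integer bases correspond to $|\beta_2| = 1$; non-integer rational bases are those with $|\beta_2| \geq 2$. Setting $A_\alpha = p-\alpha$ and $B_\alpha = q-\alpha$, the hypothesis yields an ``integer divisor pair'' distinct from the trivial $(A_0, B_0) = (-(q-1), -(p-1))$.

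The natural candidate is $A' = p(q-1)/A_\alpha$ and $B' = q(p-1)/B_\alpha$, which are automatically integer divisors of $p(q-1)$ and $q(p-1)$ respectively. A direct manipulation gives
\[
  B' - A' \;=\; -\,\frac{(q-p)(pq-\alpha)}{A_\alpha B_\alpha},
\]
so setting $\beta_2 := -(pq-\alpha)/(A_\alpha B_\alpha)$ produces the required scalar, \emph{provided} $A_\alpha B_\alpha \mid pq - \alpha$. When $d := \gcd(A_\alpha, B_\alpha) = 1$ this is automatic, because Korselt already forces $\text{lcm}(A_\alpha, B_\alpha) = A_\alpha B_\alpha$ to divide $pq - \alpha$; a short check rules out $|\beta_2| \leq 1$ (here one must exclude the degenerate equation $\alpha^2 - (p+q+1)\alpha + 2pq = 0$, which has no integer solutions for distinct primes) and verifies $\gcd(\beta_1,\beta_2) = 1$, completing this subcase.

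The main obstacle is the case $d > 1$. A structural lemma is that $d$ divides $\gcd(p-1,\,q-1)$: indeed $d \mid B_\alpha - A_\alpha = q - p$ forces $\gcd(d, pq) = 1$, hence $\gcd(d, \alpha) = \gcd(d, p) = 1$, so from $d \mid A_\alpha \mid \alpha(q-1)$ we deduce $d \mid q - 1$, and symmetrically $d \mid p - 1$. The naive construction above can fail here (for instance $N = 119,\ \alpha = 11$ yields $A_\alpha B_\alpha = -24 \nmid 108 = pq - \alpha$). I would dispatch this subcase by modifying the choice of $(A', B')$: replacing one coordinate by the corresponding coordinate of the trivial pair $(A_0, B_0)$, thereby producing a new divisor pair whose difference is a \emph{different} nontrivial multiple of $q-p$. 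The final verification of divisibility, $|\beta_2| \geq 2$, and coprimality in this modified construction is the technical heart of the argument and will require case analysis on $\gcd(\alpha, p)$ and $\gcd(\alpha, q)$, together with a careful inspection of how $d$ distributes between $A_\alpha$ and $B_\alpha$.
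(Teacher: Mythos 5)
Your reformulation via divisor pairs $(A,B)$ with $A \mid p(q-1)$, $B \mid q(p-1)$ and $q-p \mid B-A$ is correct, and your treatment of the case $\gcd(A_\alpha,B_\alpha)=1$ via complementary divisors is essentially sound (the quadratic $\alpha^2-(p+q+1)\alpha+2pq=0$ indeed has no integer roots: an integer root would force a factorization of $2pq$ into two positive divisors summing to $p+q+1$, and none of $(1,2pq)$, $(2,pq)$, $(p,2q)$, $(q,2p)$ works). This is a genuinely different route from the paper, which never argues from a general $\alpha$: it invokes the Echi--Ghanmi structure theorem to shrink $\mathbb{Z}\text{-}\mathcal{KS}(pq)\setminus\{p+q-1\}$ to the explicit candidates $ip$, $(i+1)p$, $q-p+1$, $3q-5p+3$, $\frac{2p+q-1}{2}$, and the range $[2,2p]$ when $q<2p$, and then exhibits a bespoke rational mate for each one ($pq/(p+q-\beta)$, $(k_1+1)q/(ik_1+1)$, $(k_1-1)q/((i+1)k_1-1)$, $pq/(2p-1)$).

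However, your handling of the case $d=\gcd(A_\alpha,B_\alpha)>1$ has a genuine gap, and your own example exposes it. For $N=119=7\cdot 17$ and $\alpha=11$ you get $(A_\alpha,B_\alpha)=(-4,6)$, $(A_0,B_0)=(-16,-6)$, $A'=-28$, $B'=17$. The two modified pairs your recipe produces are $(A',B_0)=(-28,-6)$ and $(A_0,B')=(-16,17)$, with differences $22$ and $33$; neither is divisible by $q-p=10$, so neither yields any $\beta_2$ at all. The pair that actually works here is $(-28,102)=\left(p(q-1)/A_\alpha,\ q(p-1)\right)$, giving $\beta=119/13=pq/(2p-1)$: the second coordinate is the \emph{full} divisor $q(p-1)$ (complementary to $1$), not the trivial pair's coordinate $-(p-1)$. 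So ``replace one coordinate by the corresponding coordinate of the trivial pair'' is the wrong repair; you would need something like the construction in Proposition~\ref{gene4}, and you would still have to prove that a valid repaired pair exists for \emph{every} $\alpha$ with $d>1$ --- which is exactly the point where the paper falls back on the structure theorem rather than arguing uniformly. A secondary, smaller issue: in the $d=1$ case you assert $\gcd(\beta_1,\beta_2)=1$, but what you actually need is $\beta_2\nmid A'$ (otherwise the fraction, which remains a Korselt base after reduction, could reduce to an integer), and this does not follow automatically from the construction; it needs an argument rather than a ``short check''.
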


  Moreover, it appear that for these numbers, the sets  $\mathbb{Z}$-$\mathcal{KS}(N)$ and  $(\mathbb{Q}\setminus\mathbb{Z})$-$\mathcal{KS}(N)$ are somehow related. To display this relation, we will show that each $N$-Korselt base in  $\mathbb{Z}\setminus\{p+q-1\}$ induces at least an other $N$-Korselt base in $(\mathbb{Q}\setminus\mathbb{Z})$-$\mathcal{KS}(N)$. Hence, the Main Theorem will be deduced immediately.

For all the rest of this paper let $p<q$ be two primes, $N=pq$ and $i,s$ be the integers given by the division algorithm of
$q$ by $p$: $q=ip+s$ with $s \in \{1, \ldots, p-1\}$.

Our work is based on the following result given  by Echi-Ghanmi~\cite{Ghanmi}.

  \begin{theorem}[]\label{structure1}

  Let $N=pq$ such that $p<q$, the following properties hold.
\begin{itemize}
    \item[$(1)$] If $q>2p^2$, then $\mathbb{Z}\text{-}\mathcal{KS}(N)=\{p+q-1\}$.
    \item[$(2)$] If $p^2-p<q<2p^2$ and $p\geq 5$, then $$\mathbb{Z}\text{-}\mathcal{KS}(N)\subseteq\{ip, p+q-1\}.$$
    \item[$(3)$] If $4p<q<p^2-p$, then
    $$\mathbb{Z}\text{-}\mathcal{KS}(N)\subseteq\{ip,(i+1)p,p+q-1\}.$$
    \item[$(4)$] Suppose that $3p<q<4p$. Then the following
    conditions are satisfied.

\begin{itemize}
       \item[$(a)$] If $q=4p-3$, then the
following properties hold.
\begin{itemize}
    \item[$(i)$] If $p\equiv1 \pmod 3$, then $$\mathbb{Z}\text{-}\mathcal{KS}(N)=\{4p, q-p+1, p+q-1\}.$$
    \item[$(ii)$] If $p\not\equiv1 \pmod 3$, then $$\mathbb{Z}\text{-}\mathcal{KS}(N)=\{q-p+1, p+q-1\}.$$

\end{itemize}
       \item[$(b)$] If $q\neq 4p-3$, then
    $$\mathbb{Z}\text{-}\mathcal{KS}(N)\subseteq\{3p,4p,p+q-1\}.$$
    \end{itemize}
    \item[$(5)$] If $2p<q<3p$, then $$\mathbb{Z}\text{-}\mathcal{KS}(N)\subseteq\{2p,3p,3q-5p+3,\dfrac{2p+q-1}{2},q-p+1,p+q-1\}.$$
\item[$(6)$] If $p<q<2p$, then $$\mathbb{Z}\text{-}\mathcal{KS}(N)\subseteq \{q+p-1\}\cup[2,2p]\setminus\{p\}.$$
\end{itemize}\end{theorem}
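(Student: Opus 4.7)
The plan is to prove the contrapositive: assuming there exists $\alpha\in\mathbb{Z}\text{-}\mathcal{KS}(pq)$ with $\alpha\neq p+q-1$, I would exhibit some $\beta\in(\mathbb{Q}\setminus\mathbb{Z})\text{-}\mathcal{KS}(pq)$. The opening reduction is algebraic: writing $\beta=\beta_1/\beta_2$ with $\gcd(\beta_1,\beta_2)=1$, the identities
\[
\beta_2N-\beta_1 \;=\; q(\beta_2p-\beta_1)+(q-1)\beta_1 \;=\; p(\beta_2q-\beta_1)+(p-1)\beta_1
\]
show that $\beta$ is an $N$-Korselt base if and only if $(\beta_2p-\beta_1)\mid(q-1)\beta_1$ and $(\beta_2q-\beta_1)\mid(p-1)\beta_1$. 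Specialising to $\beta_2=1$, $\beta_1=\alpha$ recasts the hypothesis on $\alpha$ as $(p-\alpha)\mid(q-1)\alpha$ together with $(q-\alpha)\mid(p-1)\alpha$, which will be the main leverage in what follows.

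Next I would invoke Theorem~\ref{structure1} to constrain $\alpha$ to a finite explicit list in each of the six ranges of $q/p$: outside the trivial case $q>2p^2$, the value $\alpha\neq p+q-1$ must lie in $\{ip,(i+1)p,3p,4p,q-p+1,3q-5p+3,(2p+q-1)/2\}$ or (in the range $p<q<2p$) in $[2,2p]\setminus\{p\}$. For each family I would exhibit an explicit candidate $\beta(\alpha)\in\mathbb{Q}\setminus\mathbb{Z}$ and verify the two divisibilities of the reduction. As a sample construction, when $\alpha=ip$ the rational $\beta=q/p$ is often the right choice: one computes $\beta_2p-\beta_1=p^2-q$ and $\beta_2q-\beta_1=(p-1)q$, so the second divisibility reads $(p-1)q\mid(p-1)q$, and via $\gcd(p^2-q,q)=1$ the first reduces to $(p^2-q)\mid(q-1)$, which is forced by the integer Korselt condition on $\alpha=ip$ (via $(p-ip)\mid(q-1)ip$) together with the range bound on $q$ from Theorem~\ref{structure1}.

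The principal obstacle will be the case analysis in the second step: the single formula $\beta=q/p$ does not work universally (for example $(p,q,\alpha)=(7,23,21)$ forces the genuinely different choice $\beta=112/5$), so each family arising in Theorem~\ref{structure1} requires its own ansatz, typically of the shape $\beta=(\lambda q+\mu)/(\nu p+\rho)$ with parameters dictated by the algebraic relations satisfied by $p-\alpha$ and $q-\alpha$. The most delicate range is case~(6) ($p<q<2p$), where $\mathbb{Z}\text{-}\mathcal{KS}(pq)$ can be as large as the whole interval $[2,2p]\setminus\{p\}$; there I expect to subdivide according to the residue of $\alpha$ modulo $\gcd(p-\alpha,q-\alpha)$ (a divisor of $q-p$) and to build $\beta(\alpha)$ from the divisor structure of $(p-\alpha)(q-\alpha)/\gcd(p-\alpha,q-\alpha)$, verifying at the end that $\beta_2\geq 2$ and $\gcd(\beta_1,\beta_2)=1$. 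Once every family has been exhausted the contrapositive is established, and the Main Theorem follows immediately.
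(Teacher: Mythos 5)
There is a fundamental mismatch here: the statement you were asked to prove is Theorem~\ref{structure1}, the classification of $\mathbb{Z}\text{-}\mathcal{KS}(pq)$ into explicit finite lists according to where $q$ falls relative to $p$ (the six ranges $q>2p^2$, $p^2-p<q<2p^2$, $4p<q<p^2-p$, $3p<q<4p$, $2p<q<3p$, $p<q<2p$). Your proposal instead sketches a proof of the paper's Main Theorem (Theorem~\ref{fond}), namely that $(\mathbb{Q}\setminus\mathbb{Z})\text{-}\mathcal{KS}(pq)=\emptyset$ forces $\mathbb{Z}\text{-}\mathcal{KS}(pq)=\{p+q-1\}$. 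Worse, your argument explicitly \emph{invokes} Theorem~\ref{structure1} in its second step (``Next I would invoke Theorem~\ref{structure1} to constrain $\alpha$ to a finite explicit list\ldots''), so as a proof of Theorem~\ref{structure1} it is circular and proves nothing. Note also that the present paper does not prove Theorem~\ref{structure1} at all; it imports it from the earlier Echi--Ghanmi paper \emph{The Korselt Set of $pq$} (reference~\cite{Ghanmi}), so there is no internal proof to match your attempt against in any case.

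A genuine proof of Theorem~\ref{structure1} would have to start from the integer Korselt conditions themselves. Writing $\alpha\in\mathbb{Z}\text{-}\mathcal{KS}(pq)$, one has $pq-\alpha=q(p-1)+(q-\alpha)=p(q-1)+(p-\alpha)$, hence $(q-\alpha)\mid q(p-1)$ and $(p-\alpha)\mid p(q-1)$; combined with a coprimality analysis (as in \cite[Proposition 4]{Ghanmi}, which the present paper also cites) these divisibilities bound $|q-\alpha|$ by $q(p-1)$ and, more sharply, force $q-\alpha$ to be one of the few divisors of $q(p-1)$ compatible with $(p-\alpha)\mid p(q-1)$. The six ranges of $q/p$ then arise from counting which multiples of $p$ (namely $ip$ and $(i+1)p$ with $q=ip+s$) and which exceptional values ($q-p+1$, $3q-5p+3$, $(2p+q-1)/2$, the interval $[2,2p]$) survive in each regime. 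None of this case analysis appears in your proposal; the algebraic reduction you do give (the identity $\beta_2N-\beta_1=q(\beta_2p-\beta_1)+(q-1)\beta_1$) is correct and is indeed the right starting point, but you then point it at the wrong target.
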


\bigskip

\section{Connections in $\mathbb{Z}$-$\mathcal{KS}(N)$  }

By the following result, we prove that certain $N$-Korselt bases   in $\mathbb{Z}$  induce others in the same set $\mathbb{Z}$-$\mathcal{KS}(N)$.
\begin{proposition}\label{gener1}

 Suppose that $2p<q<3p$. Then  the following properties hold.

\begin{enumerate}
                     \item   $\dfrac{2p+q-1}{2}\in\mathbb{Z}$-$\mathcal{KS}(N)$ if and only if $q-p+1\in\mathbb{Z}$-$\mathcal{KS}(N)$.
                     \item If  $3q-5p+3\in\mathbb{Z}$-$\mathcal{KS}(N)$ then $q-p+1\in\mathbb{Z}$-$\mathcal{KS}(N)$.
                   \end{enumerate}

\end{proposition}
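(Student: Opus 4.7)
The idea is to characterise each of the three candidate bases by a single divisibility condition modulo $d := q - 2p + 1 > 0$, using the congruences $q - 1 \equiv 2(p-1) \pmod{d}$ and $q + 1 \equiv 2p \pmod{d}$ together with the coprimality facts $\gcd(d, 2p-1) = 1$ and $\gcd(d, p) = 1$. The first follows because $2p - 1 \equiv q \pmod{d}$ and $q$ is a prime larger than $d$; the second holds in the range $2p < q < 3p$ because $d \leq p - 1$ whenever $p \geq 3$. The only exceptional case $p = 2$, $q = 5$ is immediate since $\alpha = q - p + 1$ and $\beta = (2p + q - 1)/2$ collapse there to the common value $4$.

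For part $(1)$, I would first compute, for $\alpha = q - p + 1$, that $p - \alpha = -d$, $q - \alpha = p - 1$, and $N - \alpha = (p-1)(q+1)$. One Korselt condition is automatic, and the other reduces, via $q + 1 \equiv 2p \pmod{d}$, to $d \mid 2p(p-1)$. Then for $\beta = (2p + q - 1)/2$, which is an integer since $q$ is odd, a parallel computation gives $p - \beta = -(q-1)/2$, $q - \beta = d/2$, and $N - \beta = (q-1)(2p-1)/2$; the $p$-condition is again automatic and the $q$-condition, after clearing the factor $2$, becomes $d \mid (q-1)(2p-1)$. Combining $\gcd(d, 2p-1) = 1$ with $q - 1 \equiv 2(p-1) \pmod{d}$ simplifies this to $d \mid 2(p-1)$. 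Finally $\gcd(d, p) = 1$ makes the two conditions $d \mid 2p(p-1)$ and $d \mid 2(p-1)$ equivalent, so the characterisations of $\alpha, \beta \in \mathbb{Z}$-$\mathcal{KS}(N)$ coincide.

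For part $(2)$, applying the same procedure to $\gamma = 3q - 5p + 3$ yields $p - \gamma = -3d$ and $N - \gamma = 2p(p-1) + d(p-3)$. Assuming $\gamma \in \mathbb{Z}$-$\mathcal{KS}(N)$, one has in particular $3d \mid N - \gamma$, hence $d \mid N - \gamma$; cancelling the obvious summand $d(p-3)$ leaves $d \mid 2p(p-1)$, which by part $(1)$ is precisely the condition placing $q - p + 1$ in $\mathbb{Z}$-$\mathcal{KS}(N)$.

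The main obstacle is not conceptual but organisational: tracking the parity that makes $\beta$ an integer, verifying the two gcd conditions throughout the prescribed range, and handling the corner case $p = 2$, $q = 5$. Once this bookkeeping is done, both parts of the proposition follow from a single modular reduction together with a handful of elementary divisibility manipulations.
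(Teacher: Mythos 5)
Your argument is correct and follows essentially the same route as the paper: each membership in part (1) is reduced to a single divisibility condition on $d=s+1=q-2p+1$ (your $d\mid 2(p-1)$ is the paper's $s+1\mid q-1$, since $q-1=2(p-1)+d$), and part (2) is settled by pulling the condition on $\gamma=3q-5p+3$ back to that same divisor. Two small points in your favour: writing $N-\gamma=2p(p-1)+d(p-3)$ and cancelling directly avoids the separate $p=3$ case the paper must treat, and you correctly isolate the corner $p=2$, $q=5$, where $s=p-1$ and the paper's blanket claim that ``$s$ is odd, hence $s<p-1$'' silently fails (there the two bases of part (1) coincide, so the equivalence is trivial).
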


\begin{proof}
First, since $q=2p+s$, the integer $s$ must be odd and so  $s<p-1$.

\begin{enumerate}

  \item We have $\alpha=\dfrac{2p+q-1}{2}\in\mathbb{Z}$-$\mathcal{KS}(N) $ if and only if

  $$\left\{\begin{array}{rrr}
p-\alpha=\dfrac{-q+1}{2} & \mid & p(q-1)\\
q-\alpha=\dfrac{s+1}{2}  & \mid & q(p-1)\\
\end{array}
\right. $$

\medskip

     which is equivalent to $s+1$  divides  $2q(p-1).$

   As in addition $s< p-1<q-1$ hence $\gcd(q,s+1)=1$,  and

   $2(p-1)=q-1-(s+1)$, it follows that

\begin{equation}\label{eq6} \dfrac{2p+q-1}{2}\in\mathbb{Z}\text{-}\mathcal{KS}(N) \ \  \text{if and only if } \ \ s+1\, \mid\, q-1.\end{equation}
   On the other hand $\beta=q-p+1\in\mathbb{Z}$-$\mathcal{KS}(N) $ is equivalent to

    $$\left\{\begin{array}{rrr}
p-\beta=-s-1 & \mid & p(q-1)\\
q-\beta=p-1  & \mid & q(p-1)\\
\end{array}
\right. $$

    which is equivalent to $s+1$ divides $ p(q-1).$

    As in addition $s< p-1$ hence $gcd(p,s+1)=1$, it follows that
\begin{equation}\label{eq7}q-p+1\in\mathbb{Z}\text{-}\mathcal{KS}(N) \ \  \text{if and only if } \ \ s+1\, \mid\, q-1.\end{equation}

     So, by $\eqref{eq6}$ and $\eqref{eq7}$, we conclude that
     $$ \dfrac{2p+q-1}{2}\in\mathbb{Z}\text{-}\mathcal{KS}(N) \ \  \text{if and only if } \ \ q-p+1\in\mathbb{Z}\text{-}\mathcal{KS}(N).$$
   \item Suppose that  $\gamma=3q-5p+3 \in\mathbb{Z}$-$\mathcal{KS}(N)$. Then,
\begin{equation}\label{eq8}p-\gamma=6p-3q-3=-3(s+1)\, \mid \, p(q-1).\end{equation}
     We consider two cases:

    \begin{itemize}
     \item If $p\neq3$, then since $s< p-1$, we have $gcd(p,3(s+1))=1$. Hence  by $\eqref{eq8}$, $ 3(s+1)$ divides $q-1$. Thus by $\eqref{eq7}$, $q-p+1\in\mathbb{Z}$-$\mathcal{KS}(N)$.

      \item Now, assume that $p=3$. First, as $1\leq s\leq p-2=1$, we get $s=1$,  $q=2p+s=7$ and $q-p+1=5$. So, easily we can check that $N=3*7=21$ is a $5$-Korselt number.

\end{itemize}

\end{enumerate}
\end{proof}
\begin{corollary}\label{congene}
 If $q>2p$ and $q-p+1 \notin \mathbb{Z}$-$\mathcal{KS}(N)$, then
 $$\mathbb{Z}\text{-}\mathcal{KS}(N)\subseteq\{ip, (i+1)p,p+q-1\}$$
\end{corollary}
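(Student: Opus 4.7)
The plan is a direct case analysis driven by Theorem \ref{structure1} together with Proposition \ref{gener1}. Since $q>2p$, the hypothesis rules out only case $(6)$ of Theorem \ref{structure1}, so I would systematically check each of the remaining cases $(1)$--$(5)$ and verify that the set $\{ip,(i+1)p,p+q-1\}$ contains all surviving candidates once $q-p+1$ is removed.

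First, cases $(1)$, $(2)$, $(3)$, and $(4)(b)$ need no extra work: the inclusions given in Theorem \ref{structure1} are already contained in $\{ip,(i+1)p,p+q-1\}$ (in case $(4)(b)$, note $i=3$, so $\{3p,4p,p+q-1\}=\{ip,(i+1)p,p+q-1\}$). Case $(4)(a)$, where $q=4p-3$, is immediately excluded by the hypothesis: both subcases $(i)$ and $(ii)$ of Theorem \ref{structure1} assert that $q-p+1$ actually lies in $\mathbb{Z}\text{-}\mathcal{KS}(N)$, contradicting our assumption.

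The only case that truly requires work is $(5)$, when $2p<q<3p$ (here $i=2$, so the target set is $\{2p,3p,p+q-1\}$). Theorem \ref{structure1} gives
$$\mathbb{Z}\text{-}\mathcal{KS}(N)\subseteq\left\{2p,\;3p,\;3q-5p+3,\;\tfrac{2p+q-1}{2},\;q-p+1,\;p+q-1\right\}.$$
To reduce this to $\{2p,3p,p+q-1\}$ I would invoke Proposition \ref{gener1}: part $(1)$ shows that $\frac{2p+q-1}{2}\in\mathbb{Z}\text{-}\mathcal{KS}(N)$ forces $q-p+1\in\mathbb{Z}\text{-}\mathcal{KS}(N)$, and part $(2)$ shows that $3q-5p+3\in\mathbb{Z}\text{-}\mathcal{KS}(N)$ forces $q-p+1\in\mathbb{Z}\text{-}\mathcal{KS}(N)$. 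Contrapositively, under the assumption $q-p+1\notin\mathbb{Z}\text{-}\mathcal{KS}(N)$, neither $\frac{2p+q-1}{2}$ nor $3q-5p+3$ can belong to $\mathbb{Z}\text{-}\mathcal{KS}(N)$, so they drop out of the inclusion.

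There is no serious obstacle here, since everything reduces to cleanly reading off the structure result and applying the contrapositive of Proposition \ref{gener1}. The only mild subtlety is to make sure the indexing is consistent: in case $(5)$ we have $i=2$ so $\{2p,3p\}=\{ip,(i+1)p\}$, and in case $(4)(b)$ we have $i=3$ so $\{3p,4p\}=\{ip,(i+1)p\}$; once these identifications are made, the six cases assemble into the stated inclusion.
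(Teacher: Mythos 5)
Your proof is correct and follows essentially the same route as the paper: invoke Theorem \ref{structure1} directly for $q>3p$ and use the contrapositive of Proposition \ref{gener1} to eliminate $3q-5p+3$ and $\tfrac{2p+q-1}{2}$ when $2p<q<3p$. Your explicit handling of case $(4)(a)$ (where the hypothesis is vacuously violated since $q-p+1$ is asserted to lie in $\mathbb{Z}\text{-}\mathcal{KS}(N)$) is a slightly more careful spelling-out of what the paper dismisses as ``straightforward,'' but it is not a different argument.
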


\begin{proof}
By Theorem ~\ref{structure1}, the answer is straight forward when $q>3p$.

Now, suppose that  $2p<q<3p$ (i.e. $i=2$).

Let $\beta\in\mathbb{Z}\text{-}\mathcal{KS}(N)$. Then, again by Theorem ~\ref{structure1}, we have

$$\beta\in\{2p,3p,3q-5p+3,\dfrac{2p+q-1}{2},q-p+1,p+q-1\}.$$

But, since $q-p+1 \notin \mathbb{Z}$-$\mathcal{KS}(N)$, we have by Proposition~\ref{gener1}

$\beta\neq 3q-5p+3,\dfrac{2p+q-1}{2}$. Thus, $\beta\in\{2p,3p,p+q-1\}$ which  what is required.

\end{proof}
\section{Connections between $\mathbb{Z}$-$\mathcal{KS}(N)$ and $(\mathbb{Q}\setminus\mathbb{Z})$-$\mathcal{KS}(N)$ }
The following result concern the case when $q<2p$.
\begin{proposition}\label{gene1}

Suppose that $q=p+s$ and $\beta \in \mathbb{Z}$ with $\beta\neq p+q-1$ and
 $gcd(p,\beta)=gcd(pq,p+q-\beta)=1$.  Then, $\beta \in \mathbb{Z}$-$\mathcal{KS}(N)$ if and only if $\dfrac{qp}{p+q-\beta}\in(\mathbb{Q}\setminus\mathbb{Z})$-$\mathcal{KS}(N)$.

\end{proposition}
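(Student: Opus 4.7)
The plan is to translate both sides of the asserted equivalence into the same pair of elementary divisibility conditions, namely $p-\beta \mid q-1$ and $q-\beta \mid p-1$, with the coprimality hypotheses used precisely to strip away extraneous prime factors. Unfolding the integer side, $\beta \in \mathbb{Z}\text{-}\mathcal{KS}(N)$ asks $p-\beta \mid pq-\beta$ and $q-\beta \mid pq-\beta$; reducing $pq-\beta$ modulo each divisor via $p \equiv \beta \pmod{p-\beta}$ and $q \equiv \beta \pmod{q-\beta}$ turns these into $p-\beta \mid \beta(q-1)$ and $q-\beta \mid \beta(p-1)$. On the rational side, $\gcd(pq,p+q-\beta)=1$ puts $\alpha = pq/(p+q-\beta)$ in lowest terms, and the exclusion $\beta \neq p+q-1$ ensures $\alpha \neq N$; the Korselt conditions unfold as $p(p-\beta) \mid pq(p+q-\beta-1)$ and $q(q-\beta) \mid pq(p+q-\beta-1)$, and cancelling the evident common prime together with the reductions $p+q-\beta-1 \equiv q-1 \pmod{p-\beta}$ and $\equiv p-1 \pmod{q-\beta}$ converts them to $p-\beta \mid q(q-1)$ and $q-\beta \mid p(p-1)$.

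Next I would harvest the necessary coprimalities. From $\gcd(p,\beta)=1$ follows $\gcd(p-\beta,p) = \gcd(p-\beta,\beta) = 1$, and from $\gcd(pq,p+q-\beta)=1$ combined with the congruences $p+q-\beta \equiv q-\beta \pmod p$ and $p+q-\beta \equiv p-\beta \pmod q$ follow $\gcd(q,p-\beta) = \gcd(p,q-\beta) = 1$. Applying these, both first conditions collapse to the single divisibility $p-\beta \mid q-1$, and the rational-side second condition to $q-\beta \mid p-1$.

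The one sticky point, and what I expect to be the main obstacle, is that the integer-side second condition $q-\beta \mid \beta(p-1)$ only simplifies to $q-\beta \mid p-1$ when $\gcd(q,\beta)=1$, a hypothesis not directly given. I would dispose of the exceptional case $q \mid \beta$ by showing both sides of the iff fail automatically there: the condition $p-\beta \mid q-1$ confines $\beta$ to the interval $[p-q+1,\,p+q-1]$, whose only multiples of $q$ are $0$ and $q$, both excluded (the former from the Korselt set, the latter making $q-\beta$ vanish); symmetrically, $q-\beta \mid p-1$ together with $q \mid q-\beta$ would force $q \leq p-1$, absurd. Hence the equivalence holds vacuously when $q \mid \beta$ and directly from the matched simplifications otherwise, completing the proof.
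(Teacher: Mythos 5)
Your reduction of both memberships to the common pair $p-\beta \mid q-1$, $q-\beta \mid p-1$ is exactly the paper's route, and your treatment of the sticky point is in fact more self-contained than the paper's: where you dispose of the case $q \mid \beta$ by showing both sides of the equivalence fail, the paper simply imports $\gcd(q,\beta)=1$ from an external result (Proposition 4 of \cite{Ghanmi}). All of the coprimality bookkeeping you list checks out.

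There is, however, one genuine omission: you never verify that $\alpha=\frac{pq}{p+q-\beta}$ lies outside $\mathbb{Z}$, which is precisely what distinguishes membership in $(\mathbb{Q}\setminus\mathbb{Z})\text{-}\mathcal{KS}(N)$ from membership in $\mathbb{Q}\text{-}\mathcal{KS}(N)$ and is the point of the proposition. Since $\gcd(pq,p+q-\beta)=1$, integrality of $\alpha$ is equivalent to $p+q-\beta=\pm 1$; the hypothesis $\beta\neq p+q-1$ excludes only $+1$, so as written your forward implication is not closed (a priori the denominator could equal $-1$, or be negative). The paper closes this by invoking Theorem~\ref{structure1}(6) to get $2\leq\beta\leq 2p$, whence $p+q-\beta\geq 2$. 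You can close it with an ingredient you already derived: once $p-\beta\mid q-1$ with $p-\beta\neq 0$, you have $|p-\beta|\leq q-1$, hence $p+q-\beta\geq 1$, and $\beta\neq p+q-1$ then forces $p+q-\beta\geq 2$, so $\alpha\notin\mathbb{Z}$. With that one line added your argument is complete, and, unlike the paper's non-integrality step, it does not even use the restriction $q<2p$.
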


\begin{proof}

  As $gcd(p,\beta)=1$ and  by~\cite[Proposition $4$]{Ghanmi} $gcd(q,\beta)=1$, we have
   $$\hspace{-3.9cm}\beta\in\mathbb{Z}\text{-}\mathcal{KS}(N)\Leftrightarrow\left\{\begin{array}{rrr}
p-\beta & \mid & q-1\\
q-\beta   & \mid & p-1\\
\end{array}
\right. $$

$$\hspace{3.5cm}\Leftrightarrow\left\{\begin{array}{rrr}
(p+q-\beta)p-pq= (p-\beta)p & \mid & p(q-1)\\
(p+q-\beta)q-pq=(q-\beta)q & \mid & q(p-1)\\
\end{array}
\right. $$
 \hspace{3.6cm} $\Leftrightarrow\dfrac{qp}{p+q-\beta}\in\mathbb{Q}$-$\mathcal{KS}(N)$.

As $\beta \notin \{p,q\}$ then $p+q-\beta \notin \{p,q\}$ .
Further, since $p<q<2p$,   we have by Theorem ~\ref{structure1}, $2\leq\beta<2p$, hence $p+q-\beta \geq 2p-\beta+1\geq2$,  that is $p+q-\beta \neq1$. So, $\dfrac{qp}{p+q-\beta}\notin\mathbb{Z}$ and we conclude that $$\dfrac{qp}{p+q-\beta}\in(\mathbb{Q}\setminus\mathbb{Z})\text{-}\mathcal{KS}(N).$$
\end{proof}

The next two results, concern the case when $p$ divides $\beta$.

\begin{proposition}\label{gene2}
If  $ip\in\mathbb{Z}$-$\mathcal{KS}(N)$,  then there exists $k_{1}\in\mathbb{N}\setminus\{0,1\}$ such that $\dfrac{(k_{1}+1)q}{i k_{1}+1}\in (\mathbb{Q}\setminus\mathbb{Z})$-$\mathcal{KS}(N)$ .
\end{proposition}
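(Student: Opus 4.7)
The plan is to exhibit the required $k_1$ explicitly, then verify the two Korselt divisibilities by direct algebra.

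First, I would unpack the hypothesis $ip\in\mathbb{Z}\text{-}\mathcal{KS}(N)$ into the divisibilities $p-ip=-(i-1)p\mid p(q-i)$ and $q-ip=s\mid p(q-i)$. Cancelling $p$ and observing that $(q-i)-(q-1)=-(i-1)$ gives $(i-1)\mid(q-1)$. Using $\gcd(s,p)=1$, the second condition becomes $s\mid q-i=i(p-1)+s$, hence $s\mid i(p-1)$. Since $p,q$ are distinct primes and $q=ip+s$, any common divisor of $s$ and $i$ would divide $q$, forcing $\gcd(s,i)=1$; combined with $s\mid i(p-1)$, this gives $s\mid p-1$. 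I therefore set $k_1:=(p-1)/s$, a positive integer.

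Second, writing $\alpha_1:=(k_1+1)q$ and $\alpha_2:=ik_1+1$, I would compute the three Korselt quantities. Using $ip=q-s$ and $(k_1+1)s=k_1 s+s=(p-1)+s$, one finds
\[
\alpha_2 p-\alpha_1=-\bigl((k_1+1)s+(i-1)p\bigr)=-(q-1),
\]
because $(p-1)+s+(i-1)p=ip+s-1=q-1$. Likewise, $\alpha_2 q-\alpha_1=(i-1)k_1 q$, and
\[
\alpha_2 N-\alpha_1=q\bigl(k_1(ip-1)+(p-1)\bigr)=k_1\,q\,(q-1),
\]
using $k_1(ip-1)+k_1 s=k_1(q-1)$. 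Both Korselt divisibilities are then immediate: $-(q-1)\mid k_1 q(q-1)$ is trivial, while $(i-1)k_1 q\mid k_1 q(q-1)$ reduces to $(i-1)\mid(q-1)$, which is exactly the first consequence extracted from the hypothesis.

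Third, I would confirm that $\alpha:=\alpha_1/\alpha_2\notin\mathbb{Z}$. Rewriting $\alpha_2=\bigl(i(p-1)+s\bigr)/s=(q-i)/s$ shows $0<\alpha_2<q$; since $q$ is prime this gives $\gcd(\alpha_2,q)=1$. Consequently $\alpha\in\mathbb{Z}$ would force $\alpha_2\mid k_1+1$, which is impossible because $i\geq 2$ and $k_1\geq 1$ imply $\alpha_2=ik_1+1>k_1+1$.

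The main obstacle is purely one of discovery: recognising that the right choice is $k_1=(p-1)/s$, so that the otherwise awkward Korselt quotient $(k_1+1)s+(i-1)p$ telescopes to the simple expression $q-1$. Once this identity is in hand, the two Korselt divisibilities reduce at once to the two arithmetic consequences $(i-1)\mid(q-1)$ and $s\mid p-1$ already supplied by the hypothesis on $ip$, and nothing further is needed.
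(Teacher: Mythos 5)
Your construction is identical to the paper's: you extract $(i-1)\mid q-1$ and $s\mid p-1$ from the hypothesis $ip\in\mathbb{Z}\text{-}\mathcal{KS}(N)$, set $k_{1}=(p-1)/s$, and propose the same base $\frac{(k_{1}+1)q}{ik_{1}+1}$. Where you genuinely diverge is the verification. The paper first reduces the fraction by $k=\gcd(k_{1}+1,ik_{1}+1)$ and then checks $\alpha_{2}p-\alpha_{1}'q\mid q-1$ together with $\alpha_{2}-\alpha_{1}'\mid p-1$, the latter via a small gcd argument showing $\frac{i-1}{k}\mid s$. You instead keep the unreduced pair $(\alpha_{1},\alpha_{2})=\bigl((k_{1}+1)q,\;ik_{1}+1\bigr)$ and compute all three Korselt quantities in closed form, $\alpha_{2}p-\alpha_{1}=-(q-1)$, $\alpha_{2}q-\alpha_{1}=(i-1)k_{1}q$ and $\alpha_{2}N-\alpha_{1}=k_{1}q(q-1)$, so both divisibilities reduce on sight to $(i-1)\mid q-1$; this is legitimate because the relation $\alpha_{2}r-\alpha_{1}\mid\alpha_{2}N-\alpha_{1}$ is unchanged when numerator and denominator are divided by a common factor, though that one-line remark should be stated since the definition requires the reduced form. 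Your non-integrality argument (from $\alpha_{2}=(q-i)/s$ one gets $0<\alpha_{2}<q$, hence $\gcd(\alpha_{2},q)=1$, and $ik_{1}+1>k_{1}+1$) is also tighter than the paper's, which asserts $\alpha_{2}\neq 1$ without justification. Two loose ends you share with the paper: the inequality $i\geq 2$ is used silently (it does follow, since $i=1$ would give $p-ip=0$, incompatible with the hypothesis); and neither argument actually delivers the stated $k_{1}\in\mathbb{N}\setminus\{0,1\}$, because $s=p-1$ forces $k_{1}=1$ --- for instance $N=33$ has $9=3p\in\mathbb{Z}\text{-}\mathcal{KS}(33)$ with $i=3$, $s=2=p-1$, $k_{1}=1$, and constructed base $\frac{11}{2}$ --- so the membership conclusion needed for the Main Theorem survives, but the clause $k_{1}\neq 1$ does not.
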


\begin{proof}

Let $\beta=ip\in\mathbb{Z}$-$\mathcal{KS}(N)$. Then
$$\left\{\begin{array}{rrr}
p-ip  & \mid & pq-ip=p(q-1)+p-ip\\
q-ip  & \mid & pq-ip=q(p-1)+q-ip\\
\end{array}
\right. $$
As $gcd(s,q)=1$, this is equivalent to
$$\left\{\begin{array}{rrr}
i-1  & \mid & q-1\\
s  & \mid & p-1\\
\end{array}
\right. $$
hence, there exist $k_{1}$ and $k_{2}$ in $\mathbb{Z}$ such that

$$\left\{\begin{array}{rrl}
 q-1& =& k_{2}(i-1)\\
p-1 & = & k_{1}s\\
\end{array}
\right. $$

As $q=ip+s$ then $k_{1}q=i k_{1}p+k_{1}s=i k_{1}p+p-1$, therefore \begin{equation}\label{eq9}(k_{1}+1)q-(i k_{1}+1)p=q-1.\end{equation}
Let $k=gcd(k_{1}+1,i k_{1}+1)$, $\alpha_{1}^{'}=\dfrac{k_{1}+1}{k}$ and $\alpha_{2}=\dfrac{i k_{1}+1}{k}$.

So, we get by $\eqref{eq9}$ \begin{equation}\label{eq10}\alpha_{1}^{'}q-\alpha_{2}p=\dfrac{q-1}{k}.\end{equation}

Now, let us prove that $\alpha_{2}-\alpha_{1}^{'} $ divides $p-1$.

First, note that \begin{equation}\label{eq100}\alpha_{2}-\alpha_{1}^{'}=\dfrac{k_{1}}{k}(i-1).\end{equation}
Since $q-1=(i-1)p+(k_{1}+1)s$ and $i-1 \mid q-1$, we deduce that $i-1\mid (k_{1}+1)s$.
Further, as $gcd(k_{1}+1,i-1)=gcd(k_{1}+1,i k_{1}+1)=k$, it follows that $m=\dfrac{i-1}{k}\mid \dfrac{k_{1}+1}{k}s$.
But $gcd(\dfrac{k_{1}+1}{k},\dfrac{i-1}{k})=1$, hence $m\mid s$. So, we conclude  by $\eqref{eq100}$, that

\begin{equation}\label{eq11}\alpha_{2}-\alpha_{1}^{'}=k_{1}m \mid k_{1}s=p-1.\end{equation}
Now, by $\eqref{eq10}$ and $\eqref{eq11}$, we obtain

$$\left\{\begin{array}{rrr}
\alpha_{2}p-\alpha_{1}^{'}q  & \mid & q-1\\
\alpha_{2}-\alpha_{1}^{'}  & \mid & p-1\\
\end{array}
\right. $$

Thus, $$\alpha=\dfrac{\alpha_{1}^{'}q }{\alpha_{2}}=\dfrac{(k_{1}+1)q}{i k_{1}+1}\in \mathbb{Q}\text{-}\mathcal{KS}(N)$$

As in addition   $gcd(\alpha_{1}^{'},\alpha_{2})=1$, $gcd(q,\alpha_{2})=1$ by $\eqref{eq10}$ and $\alpha_{2}\neq1$, we conclude that  $$\dfrac{(k_{1}+1)q}{i k_{1}+1}\in (\mathbb{Q}\setminus\mathbb{Z})\text{-}\mathcal{KS}(N).$$

\end{proof}

\begin{proposition}\label{gene3}

If $(i+1)p\in\mathbb{Z}$-$\mathcal{KS}(N)$ and $s>1$, then there exists
$k_{1}\in\mathbb{N}\setminus\{0,1\}$ such that $\dfrac{(k_{1}-1)q}{(i+1)k_{1}-1}\in (\mathbb{Q}\setminus\mathbb{Z})$-$\mathcal{KS}(N)$ .

\end{proposition}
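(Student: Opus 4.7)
The plan is to mirror the strategy of Proposition \ref{gene2}, replacing $\beta = ip$ with $\beta = (i+1)p$. First I would translate the hypothesis into explicit divisibility conditions. Writing out $p-\beta \mid pq-\beta$ and $q-\beta \mid pq-\beta$ with $\beta=(i+1)p$ and $q=ip+s$, and using that $\gcd(p-s,q)=1$ (since $q$ is prime and $p-s<q$), I expect to obtain exactly the two conditions
\[
i \mid s-1 \qquad \text{and} \qquad (p-s)\mid (p-1).
\]
Because $s>1$, the second condition allows me to define a genuine $k_1\in\mathbb{N}\setminus\{0,1\}$ by $p-1=k_1(p-s)$.

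The key bookkeeping identity I would then extract is
\[
s-1=(k_1-1)(p-s),
\]
obtained by subtracting $p-s$ from $p-1=k_1(p-s)$. Combined with $q-1=ip+(s-1)$, a direct computation (analogous to \eqref{eq9}) gives
\[
((i+1)k_1-1)\,p - (k_1-1)\,q = q-1.
\]
Setting $\alpha_1'=(k_1-1)q$ and $\alpha_2'=(i+1)k_1-1$, this relation shows that any common divisor of $\alpha_1'$ and $\alpha_2'$ must be coprime to $q$; hence $\gcd(\alpha_1',\alpha_2')=\gcd(k_1-1,i)$. Call this number $k$, and put $\alpha_1=\alpha_1'/k$, $\alpha_2=\alpha_2'/k$, so that $\alpha=\alpha_1/\alpha_2$ is in lowest terms.

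Next I would verify the two Korselt divisibilities for $\alpha$. The displayed identity gives $\alpha_2 p-\alpha_1=(q-1)/k$ (an integer because $k\mid i$ and $i\mid s-1$ force $k\mid q-1$), and the standard rewriting $\alpha_2 N-\alpha_1=q(\alpha_2 p-\alpha_1)+\alpha_1(q-1)$ reduces the first condition to $\alpha_2 p-\alpha_1\mid\alpha_1(q-1)$, which is automatic. For the second, writing $i=ki'$ and $k_1-1=kk_1'$ with $\gcd(i',k_1')=1$, one computes $\alpha_2 q-\alpha_1=i'k_1 q$ and $\alpha_1(p-1)=k_1'q\cdot k_1(p-s)$, so the condition reduces to $i'\mid k_1'(p-s)$, i.e.\ $i'\mid p-s$. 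This is the one place where the proof has real content: I would derive it from $i\mid s-1=(k_1-1)(p-s)$ by factoring out $k$ from both sides, using precisely $\gcd(i',k_1')=1$.

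Finally I would check that $\alpha\notin\mathbb{Z}$ by bounding $\alpha_2=((i+1)k_1-1)/k$ below by using $k\le k_1-1$, so $\alpha_2\ge i+1+i/(k_1-1)>1$, which together with $\gcd(\alpha_1,\alpha_2)=1$ yields $\alpha\in(\mathbb{Q}\setminus\mathbb{Z})\text{-}\mathcal{KS}(N)$. I expect the main obstacle to be the bookkeeping around the reduction by $k=\gcd(k_1-1,i)$ and the verification of the second Korselt condition; the algebraic identity $s-1=(k_1-1)(p-s)$ is the pivot that makes the condition $i\mid s-1$ translate into exactly the divisibility needed after reduction.
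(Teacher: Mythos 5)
Your proposal is correct and follows essentially the same route as the paper: the same $k_1$ defined by $p-1=k_1(p-s)$, the same pivot identity $((i+1)k_1-1)p-(k_1-1)q=q-1$, the same reduction by $k=\gcd(k_1-1,i)=\gcd(k_1-1,(i+1)k_1-1)$, and the same coprimality argument turning $i\mid s-1=(k_1-1)(p-s)$ into $i'\mid p-s$. The only differences are cosmetic (you verify the Korselt divisibilities directly from $\alpha_2 r-\alpha_1\mid\alpha_2 N-\alpha_1$ and give an explicit lower bound for $\alpha_2$, where the paper checks $\alpha_2p-\alpha_1'q\mid q-1$ and $\alpha_2-\alpha_1'\mid p-1$ and notes $\alpha_2\neq 1$).
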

\begin{proof}

 If $\beta=(i+1)p\in\mathbb{Z}$-$\mathcal{KS}(N)$,  then
$$\left\{\begin{array}{rrr}
p-(i+1)p  & \mid & pq-(i+1)p =p(q-1)+p-(i+1)p\\
q-(i+1)p  & \mid & pq-(i+1)p=q(p-1)+q-(i+1)p\\
\end{array}
\right. $$
This is equivalent to
$$\left\{\begin{array}{rrr}
i  & \mid & q-1\\
p-s  & \mid & p-1\\
\end{array}
\right. $$
hence, there exist $k_{1}$ and $k_{2}$ in $\mathbb{N}\setminus\{0\}$ such that

$$\left\{\begin{array}{rrl}
 q-1& =& k_{2}i\\
p-1 & = & k_{1}(p-s)\\
\end{array}
\right. $$

First, as $s>1$ it follows that $k_{1}>1$. Since $q=(i+1)p+s-p$, hence $k_{1}q=(i+1)k_{1}p-p+1$. Therefore, we can write \begin{equation}\label{eq12}((i+1)k_{1}-1)p-(k_{1}-1)q=q-1.\end{equation}
Let $k=gcd(k_{1}-1,(i+1)k_{1}-1)$, $\alpha_{1}^{'}=\dfrac{k_{1}-1}{k}$ and $\alpha_{2}=\dfrac{(i+1)k_{1}-1}{k}$.

So, we get by $\eqref{eq12}$
\begin{equation}\label{eq13}\alpha_{2}p-\alpha_{1}^{'}q=\dfrac{q-1}{k}.\end{equation}

Now, let us prove that $\alpha_{2}-\alpha_{1}^{'} \mid p-1$.

First, we have \begin{equation}\label{eq130}\alpha_{2}-\alpha_{1}^{'}=\dfrac{ik_{1}}{k}.\end{equation}

 Since $i\mid q-1=ip+s-1$, we obtain $i\mid s-1=(k_{1}-1)(p-s)$.

Further, as $gcd(k_{1}-1,i)=gcd(k_{1}-1,(i+1)k_{1}-1)=k$, it follows that $m=\dfrac{i}{k}\mid \dfrac{k_{1}-1}{k}(p-s)$. Hence $m\mid p-s$ since $gcd(\dfrac{k_{1}-1}{k},\dfrac{i}{k})=1$. So, we deduce by $\eqref{eq130}$ that
\begin{equation}\label{eq14}\alpha_{2}-\alpha_{1}^{'}=k_{1}m\mid k_{1}(p-s)=p-1.\end{equation}
Now, by $\eqref{eq13}$ and $\eqref{eq14}$, we get

$$\left\{\begin{array}{rrr}
\alpha_{2}p-\alpha_{1}^{'}q  & \mid & q-1\\
\alpha_{2}-\alpha_{1}^{'}  & \mid & p-1\\
\end{array}
\right. $$

Therefore, $$\alpha=\dfrac{\alpha_{1}^{'}q }{\alpha_{2}}=\dfrac{(k_{1}-1)q}{(i+1)k_{1}-1}\in \mathbb{Q}\text{-}\mathcal{KS}(N).$$

As in addition $gcd(\alpha_{1}^{'},\alpha_{2})=1$ , $gcd(q,\alpha_{2})=1$ by $\eqref{eq12}$ and $\alpha_{2}\neq1$, we deduce that  $$\dfrac{(k_{1}-1)q}{(i+1)k_{1}-1}\in(\mathbb{Q}\setminus\mathbb{Z})\text{-}\mathcal{KS}(N).$$

\end{proof}

Now,  it remains to prove that each $N$-Korselt base $\beta\in\mathbb{Z}$ generates an  $N$-Korselt base in $\mathbb{Q}\setminus\mathbb{Z}$  where $gcd(\beta,p)=1$, $2p<q<4p$ and $\beta\neq q+p-1$. This is equivalent to  discuss only the cases when $\beta\in \{3q-5p+3,\dfrac{2p+q-1}{2}, q-p+1$\}. It follows, by Corollary~\ref{congene} that we  restrain our work only for $\beta=q-p+1$.

 Assume for the next result, that $2p<q<4p$ and $gcd(q+1,p)=1$.
\begin{proposition} \label{gene4}

Suppose that  $2p<q<4p$. If $q-p+1\in\mathbb{Z}$-$\mathcal{KS}(N)$, then
  $\dfrac{pq}{2p-1}\in(\mathbb{Q}\setminus\mathbb{Z})$-$\mathcal{KS}(N)$.

\end{proposition}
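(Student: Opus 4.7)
The plan is to verify directly from the definition that $\alpha=\dfrac{pq}{2p-1}$ lies in $\mathbb{Q}\text{-}\mathcal{KS}(N)$ and is non-integral. Since $\gcd(p,2p-1)=\gcd(p,-1)=1$ and $q$ is a prime strictly greater than $2p-1$, we have $\gcd(pq,2p-1)=1$, so the fraction is already in lowest terms and I would set $\alpha_{1}=pq$, $\alpha_{2}=2p-1$. With this normalization the Korselt requirement becomes: for each prime divisor $r$ of $N=pq$, $(2p-1)r-pq$ divides $(2p-1)pq-pq=2pq(p-1)$.

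The case $r=q$ is essentially free, since $(2p-1)q-pq=q(p-1)$ obviously divides $2p\cdot q(p-1)$. Thus the whole content lives in the case $r=p$, where $(2p-1)p-pq=-p(q-2p+1)$ reduces the task to proving that $(q-2p+1)\mid 2q(p-1)$. To extract this from the hypothesis $q-p+1\in\mathbb{Z}\text{-}\mathcal{KS}(N)$, I would apply the defining divisibility at the prime $p$: $p-(q-p+1)=-(q-2p+1)$ divides $N-(q-p+1)=(p-1)(q+1)$, so $(q-2p+1)\mid(p-1)(q+1)$. Substituting $q+1=(q-2p+1)+2p$ then turns this into $(q-2p+1)\mid 2p(p-1)$.

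The only delicate step, and the one where the standing assumption $\gcd(q+1,p)=1$ enters, is removing the factor $p$. Writing $q=ip+s$ with $1\le s\le p-1$, the hypothesis $\gcd(q+1,p)=1$ amounts to $s+1<p$; but $q-2p+1\equiv s+1\pmod{p}$, so $\gcd(q-2p+1,p)=\gcd(s+1,p)=1$, and one may cancel to obtain $(q-2p+1)\mid 2(p-1)$, a fortiori $(q-2p+1)\mid 2q(p-1)$. Finally, $2p-1\ge 3$ together with $\gcd(pq,2p-1)=1$ forbids $\alpha$ from being an integer, and $\alpha\notin\{0,N\}$ is immediate, placing $\alpha$ in $(\mathbb{Q}\setminus\mathbb{Z})\text{-}\mathcal{KS}(N)$. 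The only real obstacle is this small coprimality manipulation; everything else is a routine unpacking of the two Korselt definitions.
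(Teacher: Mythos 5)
Your proof is correct, and its core is the same as the paper's: verify the two divisibility conditions for $\alpha=\frac{pq}{2p-1}$ directly, the one at $q$ being automatic and the one at $p$ being fed by the Korselt condition of $\beta=q-p+1$ at the prime $p$. The one genuine difference is organizational: you treat the whole range $2p<q<4p$ uniformly by working with the quantity $q-2p+1$ and extracting $(q-2p+1)\mid 2(p-1)$, whereas the paper splits into $2p<q<3p$ (where it derives the equivalent statement $(s+1)\mid(q-1)$, since there $q-1=2(p-1)+(s+1)$) and $3p<q<4p$ (where it instead invokes Theorem~\ref{structure1} to force $q=4p-3$ and then just checks the claim numerically). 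Your version buys a single argument with no appeal to the structure theorem, and it also makes explicit where the standing hypothesis $\gcd(q+1,p)=1$ enters (namely to cancel the factor $p$ from $(q-2p+1)\mid 2p(p-1)$); the paper's cancellation step relies on the parity observation that $s$ is odd hence $s\neq p-1$, which is the same coprimality fact in disguise. All the individual steps check out: $\gcd(pq,2p-1)=1$, the reduction of the condition at $p$ to $(q-2p+1)\mid 2q(p-1)$, the identity $N-(q-p+1)=(p-1)(q+1)$, and the non-integrality of $\alpha$.
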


\begin{proof}

     First, if $i=3$, then by Theorem ~\ref{structure1}, we must have $q=4p-3$ and so it's easy to verify that $\dfrac{pq}{2p-1}$ is an  $N$-Korselt base. Further, since $gcd(pq,2p-1)=1$ and $2p-1\neq1$, we get $\dfrac{pq}{2p-1}\notin \mathbb{Z}$. So, we conclude that   $\dfrac{pq}{2p-1}\in(\mathbb{Q}\setminus\mathbb{Z})$-$\mathcal{KS}(N)$.

    Now, suppose that $q=2p+s$. Then $s$ is odd and so $s\neq p-1 $. Assume that $ q-p+1\in \mathbb{Z}$-$\mathcal{KS}(N)$. Then $ s+1\mid p(q-1)$.
      Since in addition, $s<p-1$ hence $gcd(p,s+1)=1$, it follows that  $ s+1 \mid q-1$. Hence, by taking $\alpha_{1}^{''}=1$ and $ \alpha_{2}=2p-1$, we obtain
     $ \alpha_{2}p-\alpha_{1}^{''}pq=-p(s+1)\mid p(q-1)$. Thus, as $ \alpha_{2}q-\alpha_{1}^{''}pq=q(p-1)$, we can write
      $$\left
      \{\begin{array}{rrr}
       \alpha_{2}p-\alpha_{1}^{''}pq  & \mid & p(q-1)\\
       \alpha_{2}q-\alpha_{1}^{''}pq  & \mid & q(p-1)\\
      \end{array}
       \right. $$

     This implies that  $\dfrac{pq}{2p-1}$ is an  $N$-Korselt base.

   Now, as $gcd(pq,2p-1)=gcd(q,q-1-s)=gcd(q,s+1)=1$ and $2p-1\neq1$, we deduce that $\dfrac{pq}{2p-1}\notin \mathbb{Z}$. Thus,  $$\dfrac{pq}{2p-1}\in(\mathbb{Q}\setminus\mathbb{Z})\text{-}\mathcal{KS}(N).$$

\end{proof}

\begin{example}
 Let $N=2*7$ then $\mathbb{Z}$-$\mathcal{KS}(N)=\{6,8\}$ and
 $(\mathbb{Q}\setminus\mathbb{Z})$-$\mathcal{KS}(N)=\left\{\dfrac{7}{2}\right\}$ is exactly the set generated by  $\mathbb{Z}$-$\mathcal{KS}(N)$.

 However, for $N=3*7$, we have  \,  $\mathbb{Z}$-$\mathcal{KS}(N)=\{5,6,9\}$ and
 $(\mathbb{Q}\setminus\mathbb{Z})$-$\mathcal{KS}(N)=\left\{\dfrac{7}{2},\dfrac{7}{3},\dfrac{21}{5},\dfrac{21}{4},\dfrac{15}{2},\dfrac{33}{5}\right\}$, which is composed by more

 than the $N$-Korselt bases in $(\mathbb{Q}\setminus\mathbb{Z})$ generated by  $\mathbb{Z}$-$\mathcal{KS}(N)$.

\end{example}
\begin{proof}(of \textbf{Main Theorem})
By Propositions~\ref{gene1}, ~\ref{gene2}, ~\ref{gene3} and ~\ref{gene4} it follows immediately  that  if $(\mathbb{Q}\setminus\mathbb{Z})$-$\mathcal{KS}(N)=\emptyset$ then $\mathbb{Z}$-$\mathcal{KS}(N)=\{ q+p-1\}$.

\end{proof}
\begin{remark}
The converse of  Main Theorem is not true, for instance if $N=6=2*3$, we have   $\mathbb{Q}$-$\mathcal{KS}(N)=\left\{ 4,\dfrac{3}{2},\dfrac{10}{3},\dfrac{14}{5},\dfrac{8}{3},\dfrac{5}{2},\dfrac{18}{7},\dfrac{12}{5},\dfrac{9}{4}\right\}$.
\end{remark}
This work can motivate us to begin a research in order to  investigate more the rational Korselt set of a number $N$ with more than two prime factors. We believe that the study of an eventual relation(s) between $(\mathbb{Q}\setminus\mathbb{Z})$-$\mathcal{KS}(N)$ and $\mathbb{Z}$-$\mathcal{KS}(N)$  can simplify the task but not enough. The simple case when $N=pq$ is still full of unsolved issues, for instance, after looking to the korselt sets over $\mathbb{Q}$  of some values of $N=pq$ , we  state the following conjecture.

\begin{conjecture}
 For all $N=pq$,  we have $\mathbb{Q}$-$\mathcal{KW}(N)$ is odd.
\end{conjecture}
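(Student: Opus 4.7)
The plan is to prove $\mathbb{Q}\text{-}\mathcal{KW}(pq)$ is odd by constructing an involution $\sigma:\mathbb{Q}\text{-}\mathcal{KS}(pq)\to\mathbb{Q}\text{-}\mathcal{KS}(pq)$ whose unique fixed point is $p+q-1$. Once such $\sigma$ is in hand, the set decomposes as $\{p+q-1\}\sqcup\bigsqcup\{\alpha,\sigma(\alpha)\}$, forcing $\mathbb{Q}\text{-}\mathcal{KW}(pq)=1+2k$ and hence odd. The Main Theorem of the paper and the Example of $N=21$ already suggest that $p+q-1$ plays a uniquely privileged role among rational Korselt bases, justifying it as the forced fixed point.

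For $\alpha=\alpha_1/\alpha_2\in\mathbb{Q}\text{-}\mathcal{KS}(pq)$ in lowest terms, set $a=\alpha_2p-\alpha_1$ and $b=\alpha_2q-\alpha_1$. Using $\gcd(\alpha_1,\alpha_2)=1$, the Korselt conditions reduce to $a\mid p(q-1)$ and $b\mid q(p-1)$, together with $\alpha_2=(b-a)/(q-p)>0$, $\alpha_1=(pb-qa)/(q-p)\neq 0$, $(\alpha_1,\alpha_2)\neq(N,1)$, and $\gcd(\alpha_1,\alpha_2)=1$. The candidate fixed point $p+q-1$ corresponds to $(a,b)=(-(q-1),-(p-1))$. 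The central construction is the map $\phi:(a,b)\mapsto((q-1)a,(p-1)b)$: whenever $a\mid p$ and $b\mid q$, $\phi(a,b)$ lies again in the divisor set, and $\sigma$ should swap $(a,b)\leftrightarrow\phi(a,b)$. For $N=15$, a direct enumeration verifies that $\phi$ produces a perfect bijection between the six Korselt pairs with $a\mid p$, $b\mid q$ and the six pairs with $(q-1)\mid a$, $(p-1)\mid b$, leaving only $(-(q-1),-(p-1))$ unpaired; this gives $\mathbb{Q}\text{-}\mathcal{KW}(15)=1+2\cdot 6=13$, which is indeed odd.

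The principal obstacle is extending $\sigma$ uniformly to all $pq$. When $\gcd(p,q-1)>1$ or $\gcd(q,p-1)>1$, the multiplicative decomposition of $a$ into a $p$-part and a $(q-1)$-part fails to be unique, so $\phi$ lacks a canonical definition and a finer rule indexed by the prime factorizations of $pq(p-1)(q-1)$ becomes necessary. When $q$ is close to $p$ (as in $N=6$, where $a\in\{1,2,4\}$ all divide $p(q-1)=4$), the iterate $\phi^{2}$ still lies in the divisor lattice, so $\phi$ generates chains of length greater than two; odd-length chains must be handled so that exactly $p+q-1$ sits in the unique unpaired position. One must also verify that $\sigma(\alpha)$ survives reduction to lowest terms: the pair $(a,b)=(-6,2)$ for $N=6$ satisfies the divisibility conditions and yields $(\alpha_1,\alpha_2)=(18,4)$, but its reduction $(9,2)$ corresponds to $(a,b)=(-5,-3)$ which is not Korselt, showing that naively applying $\phi$ in cofactored coordinates can destroy the Korselt property.

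As a fallback strategy, one may bypass an explicit $\sigma$ and attack the parity directly. Equip the parametrized set of Korselt pairs $(a,b)$ with a $\mathbb{Z}/2$-valued invariant (for instance, the parity of an appropriate $2$-adic or sign-flip count on the $(a,b)$ decomposition) which isolates $(-(q-1),-(p-1))$ as the uniquely solo contribution, and show that all other pairs cancel in $\mathbb{Z}/2$ under the combined action of $\phi$, of $\alpha\mapsto N/\alpha$, and of the constructive pairings already recorded in Propositions \ref{gene1}--\ref{gene4}. Such an invariant-based argument may reveal the hidden arithmetic parity without requiring a uniform closed form for $\sigma$, and may simultaneously explain why the rigid relations observed in the tables always produce an odd total.
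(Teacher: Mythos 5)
This statement is left as an open conjecture in the paper; there is no proof there to compare against, so the only question is whether your proposal actually closes it. It does not. What you have written is a research plan, not a proof: the involution $\sigma$ is never defined beyond the special situation where $a\mid p$ and $b\mid q$ and the factors $p,q-1$ (resp.\ $q,p-1$) are coprime, and you yourself list the points at which the construction breaks down --- non-uniqueness of the $p$-part/$(q-1)$-part decomposition when $\gcd(p,q-1)>1$ or $\gcd(q,p-1)>1$, chains of length greater than two under $\phi$ (already for $N=6$), and the fact that reduction to lowest terms can carry a Korselt pair to a non-Korselt pair. Each of these is precisely where the parity argument would have to do its work, and none is resolved. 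A single verified instance ($N=15$) and a proposed ``fallback'' $\mathbb{Z}/2$-invariant that is never exhibited do not constitute a proof.

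There is also a gap earlier in the setup. With $a=\alpha_2p-\alpha_1$ and $b=\alpha_2q-\alpha_1$, the Korselt condition $a\mid \alpha_2N-\alpha_1$ is equivalent to $a\mid \alpha_1(q-1)$ (since $\alpha_2N-\alpha_1=qa+\alpha_1(q-1)$), not to $a\mid p(q-1)$; the latter follows only under additional gcd hypotheses on $\alpha_1$ such as those imposed in Propositions~\ref{gene1}--\ref{gene4}. So even the identification of $\mathbb{Q}$-$\mathcal{KS}(pq)$ with a set of divisor pairs $(a,b)$ of $p(q-1)$ and $q(p-1)$ --- the lattice on which your whole pairing is supposed to act --- is not established. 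Until $\sigma$ (or the substitute invariant) is defined on all of $\mathbb{Q}$-$\mathcal{KS}(pq)$ and shown to be a well-defined involution with $p+q-1$ as its only fixed point, the conjecture remains open.
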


\bigskip

\end{document}